 \newtheorem{thm}{Theorem}[section]
 \newtheorem{cor}[thm]{Corollary}
 \theoremstyle{definition}
 \theoremstyle{remark}
 \newtheorem{rem}[thm]{Remark}
 \numberwithin{equation}{section}
\newcommand{\Q}{\mathbb Q}
\newcommand{\Z}{\mathbb Z}
\def\Gal{\operatorname{Gal}}
\begin{document}

%
%
%
%
%
%
%
%
%

\title[Noether's problem for cyclic groups]
 {On Noether's rationality problem\\ for cyclic groups over $\mathbb{Q}$}

\author[B. Plans]{Bernat Plans}

\address{%
Departament de Matem\`{a}tiques \\
Universitat Polit\`{e}cnica de Catalunya\\
Av. Diagonal, 647\\
08028 Barcelona}

\email{bernat.plans@upc.edu}

\thanks{Research partially supported by grant 2014 SGR-634 and grant MTM2015-66180-R}

\subjclass{12F10, 12F20, 12A35, 12A50}

\keywords{Noether's problem, rational field extension, cyclic group, cyclotomic field}


\begin{abstract}
Let $p$ be a prime number. Let $C_p$, the cyclic group of order $p$, permute transitively a set of indeterminates $\{ x_1,\ldots ,x_p \}$. We prove that the invariant field $\Q(x_1,\ldots ,x_p)^{C_p}$ is rational over $\Q$ if and only if the $(p-1)$-th cyclotomic field $\Q(\zeta_{p-1})$ has class number one.
\end{abstract}

\maketitle
\section{Introduction}
Let a finite group $G$ act regularly on a set of indeterminates $\{x_1,\ldots ,x_n\}$ and let $k$ be a field. {\it Noether's problem for $G$ over $k$} asks whether the field extension $k(x_1,\ldots ,x_n)^G/k$ is rational, i.e. purely transcendental. 

The present note deals with Noether's problem for finite cyclic groups over the field of rational numbers. The reader is referred to \cite{Hoshi2015} for a brief survey of Noether's problem for abelian groups, including the most relevant references to work of Masuda, Swan, Endo, Miyata, Voskresenski,  Lenstra and others.

Let $P_{\Q}$ denote the set of prime numbers $p$ for which $\Q(x_1,\ldots ,x_p)^{C_p}/\Q$ is rational, where $C_p$ denotes the cyclic group of order $p$. 

Lenstra proved in \cite[Cor. 7.6]{Lenstra74} that $P_{\Q}$ has Dirichlet density $0$ inside the set of all prime numbers. Moreover, he suggested in \cite[p. 8]{Lenstra79} that $P_{\Q}$ could be finite and that perhaps coincides with the set 
$$R:=\{2,3,5,7,11,13,17,19,23,29,31,37,41,43,61,67,71\}.$$ 
It is known that $R\subseteq P_{\Q}$. This is a consequence of the fact that, by the main result in \cite{MasleyMontgomery1976}, $R$ is nothing but the set of prime numbers $p$ such that the $(p-1)$-th cyclotomic field $\Q(\zeta_{p-1})$ has class number one.

For prime numbers $p<20000$, some computational evidence in favour of the equality $P_{\Q}=R$ is given by Hoshi in \cite{Hoshi2015}. 

Our goal is to check the validity of Lenstra's suggestion. We prove:

\begin{thm}
\label{main}
$P_{\Q}=R$.
\end{thm}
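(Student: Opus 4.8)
The plan is to establish the inclusion $P_\Q\subseteq R$, since $R\subseteq P_\Q$ has already been recalled above. Equivalently, I will show that if the class number of $\Q(\zeta_{p-1})$ exceeds $1$, then $\Q(x_1,\dots,x_p)^{C_p}$ is \emph{not} rational over $\Q$. First one performs the two standard reductions. Writing the regular $\Q[C_p]$-module as $\Q\oplus W$ with $W$ the faithful $(p-1)$-dimensional summand, the no-name lemma gives that $\Q(x_1,\dots,x_p)^{C_p}$ is rational over $\Q$ if and only if $\Q(W)^{C_p}$ is. Here $W$ is $\Q(\zeta_p)$ with $C_p$ acting by multiplication by $\zeta_p$, so $\Q(W)^{C_p}$ is the function field of the $\Q$-torus $S$ obtained as the quotient of $T:=R_{\Q(\zeta_p)/\Q}\mathbb G_m$ by the finite cyclic subgroup $\langle\zeta_p\rangle\subset T(\Q)=\Q(\zeta_p)^{\times}$. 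The torus $S$ is split by $\Q(\zeta_p)$, with splitting group $\Gamma:=\Gal(\Q(\zeta_p)/\Q)\cong(\Z/p\Z)^{\times}$, which is \emph{cyclic} of order $p-1$. Its character lattice $M=X^{*}(S)$ sits in an exact sequence
\[
0\longrightarrow M\longrightarrow \Z[\Gamma]\longrightarrow \Z/p\Z\longrightarrow 0 ,
\]
where $\Z/p\Z$ carries the $\Gamma$-action through the cyclotomic character $\chi\colon\Gamma\xrightarrow{\ \sim\ }(\Z/p\Z)^{\times}$ and the surjection sends the basis element $\gamma$ to $\chi(\gamma)\bmod p$. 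Fixing a primitive root $g$ modulo $p$ and the generator $\sigma_g$ of $\Gamma$ with $\chi(\sigma_g)=g$, one identifies $M$ with the ideal $(p,\ \sigma_g-g)$ of $\Z[\Gamma]=\Z[t]/(t^{p-1}-1)$, a maximal ideal of index $p$ lying over $p$.

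Now suppose $\Q(x_1,\dots,x_p)^{C_p}$ is rational over $\Q$; then $S$ is stably rational over $\Q$, so its flabby class $[M]^{\mathrm{fl}}$ vanishes in the flabby class group of $\Gamma$-lattices. Since $\Gamma$ is cyclic, the structure theory of Endo and Miyata applies: using the conductor-square decomposition of $\Z[\Gamma]$ along its cyclotomic quotients $\Z[\zeta_d]$ ($d\mid p-1$), the deviation of $M$ from a permutation lattice is concentrated at the faithful component $\Z[\zeta_{p-1}]$. Indeed, for each \emph{proper} divisor $d\mid p-1$ the corresponding component ideal is $(p,\ \zeta_d-g)$, and this is the unit ideal: $p$ splits completely in $\Q(\zeta_d)$, each residue field above $p$ being $\mathbb F_p$ with $\zeta_d$ reducing to a primitive $d$-th root of unity, whereas $g$ has order $p-1\neq d$ in $\mathbb F_p^{\times}$, so no prime over $p$ contains $\zeta_d-g$. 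At the top component $d=p-1$ the ideal $(p,\ \zeta_{p-1}-g)$ is a prime $\mathfrak a\subset\Z[\zeta_{p-1}]$ above $p$. The aim of this step is then to prove that $[M]^{\mathrm{fl}}$ is nonzero whenever $\operatorname{Cl}(\Q(\zeta_{p-1}))$ is nontrivial — equivalently, that stable rationality of $S$ forces $\Z[\zeta_{p-1}]$ to be a principal ideal domain, i.e. $h(\Q(\zeta_{p-1}))=1$.

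With that step granted the theorem follows immediately: by the main result of \cite{MasleyMontgomery1976}, $h(\Q(\zeta_{p-1}))=1$ exactly when $p\in R$, so $P_\Q\subseteq R$; together with $R\subseteq P_\Q$ this yields $P_\Q=R$.

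The real difficulty is the middle step: proving $[M]^{\mathrm{fl}}\neq 0$ as soon as $h(\Q(\zeta_{p-1}))>1$. A priori the lattice $M$ only records an ideal class above the \emph{single} prime $p$ in $\Z[\zeta_{p-1}]$, and mere principality of such an ideal need not force the whole class group to be trivial; the crux is therefore to show that the flabby-class obstruction is in fact sensitive to all of $\operatorname{Cl}(\Q(\zeta_{p-1}))$. This forces one past the formal lattice bookkeeping and into genuine arithmetic of $\Q(\zeta_{p-1})$ — the complete splitting of $p$, together with genus-theoretic or class-field-theoretic input — in the spirit of Swan's original treatment of the case $p=47$. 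By contrast, the no-name and torus reductions, the explicit description of $M$, and the final translation into cyclotomic class numbers are routine or already in the literature.
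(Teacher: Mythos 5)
Your reduction to the torus $S$ and its character lattice $M$ is fine, but it only re-derives what the paper cites from Endo--Miyata: by \cite[Thm.~3.1]{EndoMiyata1973}, rationality of $\Q(x_1,\ldots,x_p)^{C_p}/\Q$ is \emph{equivalent} to the existence of $\alpha\in\Z[\zeta_{p-1}]$ with $N_{\Q(\zeta_{p-1})/\Q}(\alpha)=\pm p$, i.e.\ to the principality of the degree-one primes above $p$. Your ``middle step'' --- that $[M]^{\mathrm{fl}}\neq 0$ whenever the class group of $\Q(\zeta_{p-1})$ is nontrivial --- is therefore not a lattice-theoretic refinement waiting to be extracted: the flabby class vanishes exactly when the single ideal class of $\mathfrak a=(p,\zeta_{p-1}-g)$ is trivial, and it cannot detect anything beyond that one class. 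What you have labelled ``the real difficulty'' and deferred to unspecified ``genus-theoretic or class-field-theoretic input'' is precisely the theorem itself, restated; no proof of it is offered, and no purely algebraic proof is known. The implication ``the primes above $p$ in $\Z[\zeta_{p-1}]$ are principal $\Rightarrow h(\Q(\zeta_{p-1}))=1$'' is a genuinely arithmetic fact about these particular fields, not a formal one.

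The paper closes this gap by a completely different, quantitative route. Given $\alpha$ of norm $\pm p$, the ideal $(\alpha)$ is a principal prime above $p$ with $\phi(p-1)$ distinct Galois conjugates (since $p$ splits completely), and the Amoroso--Dvornicich lower bound for heights in abelian extensions yields $\log(p)/\phi(p-1)\geq \log(5)/12$ (with the sharper constant $\log(7/2)/8$ when $p\not\equiv 1\pmod 7$). Combined with the Rosser--Schoenfeld lower bound for $\phi$, this forces $p<173$; Hoshi's computations for $p<173$ leave only $p\in\{59,83,107,163\}$, which are killed by the sharper constant. Only then does Masley--Montgomery enter, to identify $R$ with the set of $p$ for which $h(\Q(\zeta_{p-1}))=1$. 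If you want to salvage your outline, you must replace the middle step by an actual argument bounding $p$; as written, the proposal assumes the conclusion at its central point.
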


From \cite[Cor. 3]{Lenstra79} and \cite[Prop. 4]{Lenstra79}, we get: 

\begin{cor}
\label{coro}
Let $n$ be a positive integer and let $C_n$ denote the cyclic group of order $n$. Then $\Q(x_1,\ldots ,x_n)^{C_n}/\Q$ is rational if and only if $n$ divides 
$$2^2\cdot3^m\cdot5^2\cdot7^2\cdot11\cdot13\cdot17\cdot19\cdot23\cdot29\cdot31\cdot37\cdot41\cdot43\cdot61\cdot67\cdot71,$$
for some $m\in \Z_{\geq 0}$.
\end{cor}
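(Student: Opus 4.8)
Since the inclusion $R\subseteq P_{\Q}$ is recalled in the introduction, the entire content of the theorem is the reverse inclusion $P_{\Q}\subseteq R$; by \cite{MasleyMontgomery1976} this is precisely the assertion that \emph{if $\Q(x_1,\ldots,x_p)^{C_p}/\Q$ is rational, then $h(\Q(\zeta_{p-1}))=1$}. As a rational extension is in particular stably rational, and as the reverse implication ``$h(\Q(\zeta_{p-1}))=1\Rightarrow$ rational'' is already known, it is enough to prove the a priori weaker statement that \emph{stable} rationality of $\Q(x_1,\ldots,x_p)^{C_p}/\Q$ forces $h(\Q(\zeta_{p-1}))=1$; this also has the effect of showing that for $C_p$ rationality, stable rationality and $h(\Q(\zeta_{p-1}))=1$ are all equivalent.

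The plan is to follow Lenstra and pass to a lattice-theoretic invariant. Written multiplicatively, $\Q(x_1,\ldots,x_p)^{C_p}$ is the multiplicative invariant field attached to the $C_p$-lattice $\Z[C_p]$ with its regular action, and its (stable) rationality is governed by a flasque resolution of that lattice (Endo--Miyata, Voskresenskii, Lenstra). The $C_p$-action becomes diagonal over $\Q(\zeta_p)$, so the relevant data is the governing lattice regarded as a module $M$ over $\Delta:=\Gal(\Q(\zeta_p)/\Q)\cong(\Z/p\Z)^{\times}$, cyclic of order $p-1$. Using the augmentation sequence $0\to\Z[\zeta_p]\to\Z[C_p]\to\Z\to0$ together with the $\Delta$-action I would write $M$ out explicitly, produce a flasque resolution $0\to M\to P\to F\to 0$ with $P$ a permutation $\Z[\Delta]$-lattice, and thereby reduce the problem to the vanishing of the flasque class $[F]$.

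The crux is the arithmetic evaluation of $[F]$. Since $\Delta\cong C_{p-1}$ is cyclic, the order $\Z[C_{p-1}]$ splits along the rings $\Z[\zeta_d]$ for $d\mid p-1$, and $[F]$ is assembled from ideal classes in the groups $\mathrm{Cl}(\Q(\zeta_d))$, $d\mid p-1$, the contribution of the faithful part $d=p-1$ being decisive. The divisibility $h(\Q(\zeta_d))\mid h(\Q(\zeta_{p-1}))$ for $d\mid p-1$ collapses the condition to the single field $\Q(\zeta_{p-1})$, so that $[F]=0$ automatically when $h(\Q(\zeta_{p-1}))=1$. The main obstacle --- and the genuine content of the note --- is the converse: one must identify the ideal class attached to $F$ concretely enough to show that it is non-principal as soon as $h(\Q(\zeta_{p-1}))>1$, i.e. that it is a true witness of a non-trivial class group and not merely potentially so. Granting this, $[F]=0\Leftrightarrow h(\Q(\zeta_{p-1}))=1$, so (stable) rationality holds exactly for $p\in R$, which is Theorem~\ref{main}; Corollary~\ref{coro} then follows by feeding this into \cite[Cor.~3]{Lenstra79} and \cite[Prop.~4]{Lenstra79}.
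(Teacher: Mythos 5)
Your final step --- deducing Corollary \ref{coro} from Theorem \ref{main} together with \cite[Cor. 3]{Lenstra79} and \cite[Prop. 4]{Lenstra79} --- is exactly what the paper does, so that part is fine. The problem is the route you propose to Theorem \ref{main}, where you explicitly defer the decisive step (``Granting this\dots''). The lattice-theoretic reduction you sketch is in substance \cite[Thm. 3.1]{EndoMiyata1973}, but it does not output the condition $h(\Q(\zeta_{p-1}))=1$: it outputs the condition that $\Z[\zeta_{p-1}]$ contain an element $\alpha$ of norm $\pm p$, i.e.\ that the (completely split, mutually conjugate) primes of $\Q(\zeta_{p-1})$ above $p$ be principal. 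That is implied by $h(\Q(\zeta_{p-1}))=1$, which gives the known inclusion $R\subseteq P_{\Q}$; the converse is the entire content of the note, and it is not accessible by ``identifying the ideal class concretely'': a nontrivial class group does not by itself prevent one particular prime from being principal, and no purely algebraic manipulation of flasque classes will decide whether the class of a prime above $p$ is trivial. So the crux you grant is the whole theorem.

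The paper closes this gap analytically, not algebraically. If $\mathfrak{p}=(\alpha)$ is a principal prime above $p$, then $\alpha$ has degree $\phi(p-1)$ and norm $\pm p$, and since $p$ splits completely one has $(\alpha)\neq(\overline{\alpha})$; the Amoroso--Dvornicich height lower bound \cite[Cor. 2]{AmorosoDvornicich2000} then forces $\log(p)/\phi(p-1)\geq \log(5)/12$. Combined with the Rosser--Schoenfeld lower bound for $\phi$ \cite[Thm. 15]{RosserSchoenfeld1962} this yields $p<173$, after which Hoshi's computations \cite{Hoshi2015} leave only $p\in\{59,83,107,163\}$, which are eliminated by the sharper constant $\log(7/2)/8$ valid for $p\not\equiv 1\pmod{7}$. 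Without some substitute for this height-theoretic input (or for the computations it makes finite), your argument cannot conclude; as written it establishes only the easy inclusion $R\subseteq P_{\Q}$ and hence only the ``if'' direction of Corollary \ref{coro}.
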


\section{Proof}
\begin{proof}[Proof of Thm. \ref{main}]
As has already been mentioned, the inclusion $R\subseteq P_{\Q}$ is known. See \cite[Prop. 3.4]{EndoMiyata1973}.
 
Let $p\in P_{\Q}$. This implies (actually, it is equivalent to) the existence of an element $\alpha \in \Z[\zeta_{p-1}]$ with norm $N_{\Q(\zeta_{p-1})/\Q}(\alpha)=\pm p$. See \cite[Thm. 3.1]{EndoMiyata1973}.

Thus, $\mathfrak{p}=(\alpha)$ is a principal prime ideal in $\Z[\zeta_{p-1}]$ above $(p)$. 

If $\Gal(\Q(\zeta_{p-1})/\Q)=\{\sigma_1,\ldots ,\sigma_m\}$, then we have the prime ideal decomposition 
$$(p)\Z[\zeta_{p-1}] = \sigma_1(\mathfrak{p}) \cdots \sigma_m(\mathfrak{p}).$$
Here $m=[\Q(\zeta_{p-1}):\Q]=\phi(p-1)$, where $\phi$ denotes Euler's totient function. Note that $(p)$ splits completely in $\Q(\zeta_{p-1})$, hence $\sigma_i(\mathfrak{p})\neq \sigma_j(\mathfrak{p})$ for $i\neq j$.

Now, a result of Amoroso and Dvornicich \cite[Cor. 2]{AmorosoDvornicich2000} ensures that
$$\frac{\log(p)}{\phi(p-1)}\geq 
\left\{ \begin{array}{ll}
            \dfrac{\log(5)}{12}, & \text{ for every } p, \\[.3cm]
            \dfrac{\log(7/2)}{8}, & \text{ for every } p\not\equiv 1 \pmod{7}.
           \end{array}\right.$$
It may be worth mentioning here that we are not assuming that $\Q(\zeta_{p-1})$ contains an imaginary quadratic subfield, eventhough this hypothesis is apparently used in the proof of \cite[Cor. 2]{AmorosoDvornicich2000}; in fact, if $\overline{\alpha}$ denotes the complex conjugate of $\alpha$, then the argument in \cite[Cor. 2]{AmorosoDvornicich2000} works whenever $(\alpha)\neq (\overline{\alpha})$, and this holds because $(p)$ splits completely in $\Q(\zeta_{p-1})$.

On the other hand, from a result of Rosser and Schoenfeld \cite[Thm. 15]{RosserSchoenfeld1962}, we also know that
$$\frac{\log(p)}{\phi(p-1)}<\frac{\log(p)}{p-1}\left( e^C \log(\log(p-1))+\frac{5}{2\log(\log(p-1))} \right),$$
where $C\approx 0.57721$ denotes Euler's constant.

If $f(p)$ denotes the right hand side of the above inequality, it is easily checked that $f(x)$ defines a decreasing function for, say, $x> 43$. Since $f(173)<\frac{\log(5)}{12}$, we conclude that $p<173$.

Once we restrict ourselves to prime numbers $p<173$, Hoshi's computations \cite{Hoshi2015} show that the only possible counterexamples to the inclusion $P_{\Q}\subseteq R$ are $59$, $83$, $107$ and $163$. 

Finally, each $p\in \{59,83,107,163\}$ satisfies 
$$p\not\equiv 1\pmod{7} \quad \text{ and } \quad \frac{\log(p)}{\phi(p-1)}<\frac{\log(7/2)}{8},$$
hence $p\notin P_{\Q}$.

\end{proof}

\begin{rem}
Let $n=p^r$ for some prime number $p\geq 5$. 

Lenstra proved \cite[Lemma 5]{Lenstra79} that $\Z[\zeta_{\phi(n)}]$ contains no element of norm $\pm p$ in the following cases:
\begin{itemize}
 \item[(i) ] $p\geq 11$ and $r\geq 2$.
 \item[(ii) ] $p\geq 5$ and $r\geq 3$.
\end{itemize}
Then, by \cite[Thm. 3.1]{EndoMiyata1973}, $\Q(x_1,\ldots ,x_n)^{C_n}/\Q$ cannot be rational in these cases \cite[Prop. 4]{Lenstra79}. 

Arguing as in the proof of Theorem \ref{main}, one can easily prove Lenstra's Lemma as follows. 

If $\alpha \in \Z[\zeta_{\phi(n)}]$ has norm $\pm p$, then $\mathfrak{p}=(\alpha)$ is a principal prime ideal above $(p)$ whose inertia degree over $(p)$ is $1$. Since $(p)$ splits completely in $\Z[\zeta_{p-1}]$, it must be $\mathfrak{p}\neq \overline{\mathfrak{p}}$. It follows that Amoroso and Dvornicich's result \cite[Cor. 2]{AmorosoDvornicich2000} applies and it ensures that
$$\frac{\log(p)}{\phi(\phi(n))} \geq \frac{\log(5)}{12}.$$ 
But it is readily seen that this inequality does not hold in cases (i) and (ii), just checking that:
\begin{itemize}
 \item[1) ] In case (i),  $\dfrac{\log(p)}{\phi(\phi(n))}\leq \dfrac{\log(p)}{2(p-1)}\leq \dfrac{\log(11)}{2\cdot 10}<\dfrac{\log(5)}{12} $.
 \item[2) ] In case (ii), $\dfrac{\log(p)}{\phi(\phi(n))}\leq \dfrac{\log(p)}{p(p-1)}\leq \dfrac{\log(5)}{5\cdot 4}<\dfrac{\log(5)}{12} $.
\end{itemize}
\end{rem}



\end{document}